\numberwithin{thm}{section}
\numberwithin{alem}{section}
\begin{document}
\nocite{*}
\title{Vertex-Domatic, Edge-Domatic  and Total Domatic Number of Uniform Hypergraphs}
%
%
\author{S. P. Dash\\ Email: tin.dsm@gmail.com}
\authorrunning{S. P. Dash}
\date{}

%

\maketitle              

\begin{abstract}
E. J. Cockayne and S. T. Hedetniemi introduced the concept of domatic number of a graph. B. Zelinka extended the concept to the uniform hypergraphs. Further, B. Zelinka defined the concept of edge-domatic number and total edge-domatic number of a graph. In this paper, we investigate and prove some assertions in connection with vertex domatic number, edge-domatic number and total domatic number of some specific uniform hypergraphs. \\
\end{abstract}
		
\begin{keywords}
Hypergraphs, Domatic Number, Total domatic number,  Edge-domatic number, Total edge-domatic number
\end{keywords}
	

\section{Introduction}
The concept of domatic number of a graph was introduced in \cite{Cockayne3} and B. Zelinka \cite{Zelinka2} 
introduced the analogous definition for hypergraphs. A vertex-dominating set $V_d$ of a hypergraph $H(V,E)$ is a 
subset of $V$ such that $\forall x\in (V-V_d)$, $\exists y\in V_d$ with $x$ and $y$ are adjacent to each other. 
A partition $P$ of $V$ is a vertex-domatic partition of $H$ if each class in $P$ is a vertex-dominating set of $H$.
The maximum possible number of classes in a vertex-domatic partition is called the vertex-domatic number\cite{Cockayne3} 
of $H$ and is denoted by $d(H)$.\\

A total vertex-dominating set $V_t$, of a hypergraph $H(V,E)$ is a subset of $V$ such that $\forall x\in V$, $\exists y\in V_t$, with 
$x$ and $y$ are adjacent to each other. The total vertex-domatic number ($d_t(H)$) can be defined accordingly.\\

Throughout this paper, we denote vertex-dominating set, vertex-domatic partition and  vertex-domatic number as simply dominating set, 
domatic partition and domatic number respectively. We also denote total vertex-dominating set, 
total vertex-domatic partition and total vertex-domatic number as total dominating set, 
total domatic partition and total domatic number respectively.\\

B. Zelinka in \cite{Zelinka1} introduced the edge-domatic number of a graph. 
In this paper, we present the analogous definition for hypergraphs.
We know, two hyperedges are adjacent, if they share at least one common vertex.
An edge-dominating set $(E_d)$ of a hypergraph$H(V,E)$ is a subset of $E$, such that $\forall x_e \in (E-E_d)$, 
$\exists y_e\in E_d$ with $x_e$ and $y_e$ are adjacent to each other.
A partition $P_e$ of the edge set $E$ is an edge-domatic partition of $H$ if each class in $P_e$ is an edge-dominating set of $H$.
The maximum possible number of classes in an edge-domatic partition of $H$ is called the edge-domatic number 
and is denoted by $ed(H)$.

A total edge-dominating set $E_t$ of a hypergraph $H$ is a subset of $E$ such that $\forall x\in E$, $\exists y\in E_t$, with 
$x$ and $y$ are adjacent to each other. The total edge-domatic number ($ed_t(H)$) can be defined accordingly.\\

In this paper, we inherit the properties of hypergraphs and follow the previous definitions to present some assertions on the domatic number 
and total domatic number. Then we present some tight and lower bounds for edge-domatic number and the total edge-domatic number of some specific hypergraphs.

We have used following notations throughout this paper to make it explicit :\\
Let, $H(V,E)$ be a hypergraph.\\
$d(H)$: Domatic number of $H$.\\
$d_t(H)$: Total domatic number of $H$.\\
$ed(H)$: Edge-domatic number of $H$.\\
$ed_t(H)$: Total edge-domatic number of $H$.\\

In section-$2$, we determine domatic number and total domatic number of complete uniform hypergraphs and 
complete bipartite uniform hypergraphs. In section-$3$, we derive a lower bound on edge-domatic number for 
complete and bipartite uniform hypergraphs. Furthermore, for certain hypergraphs, we also derive the tight 
bounds of edge-domatic number.
In section-$4$, we present a lower bound on total edge-domatic number for complete uniform and complete 
bipartite uniform hypergraphs. To the best of our knowledge, these are the first non-trivial bounds on 
domatic number for uniform hypergraphs.

\section{Domatic Number and Total Domatic Number of Uniform Hypergraphs}
The following two theorems concern the domatic number for complete uniform hypergraphs and complete bipartite uniform hypergraphs.
\begin{theorem}
Let $H(V,E)$ be a complete $r$-uniform hypergraph with $n$ vertices, then\\$d(H)=n$ and $d_t(H)=\lfloor \frac{n}{2}\rfloor$.
\end{theorem}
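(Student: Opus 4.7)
The plan rests on a single simple observation: in a complete $r$-uniform hypergraph on $n\ge r$ vertices, every pair of distinct vertices $u,v$ is adjacent, because we can always extend $\{u,v\}$ to an $r$-subset, which is a hyperedge. So the adjacency structure on $V$ is the same as that of the complete graph $K_n$, and once this is noticed both equalities become short.

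For the equality $d(H)=n$, I would first give the trivial upper bound $d(H)\le|V|=n$, since any partition of $V$ has at most $n$ parts. For the matching lower bound I would exhibit the partition of $V$ into singletons $\{v_1\},\ldots,\{v_n\}$: by the observation above, for each $i$ and each $x\ne v_i$ the vertex $v_i$ is adjacent to $x$, so $\{v_i\}$ dominates $V\setminus\{v_i\}$ and hence is a (vertex-)dominating set. This gives a domatic partition into $n$ classes.

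For the equality $d_t(H)=\lfloor n/2\rfloor$, the upper bound comes from the fact that no singleton $\{v\}$ can be a total dominating set: the defining condition requires some $y\in V_t$ adjacent to $v$ itself, and $v$ is not adjacent to itself. Therefore each class of a total domatic partition has size at least $2$, so the number of classes is at most $\lfloor n/2\rfloor$. For the lower bound I would partition $V$ into $\lfloor n/2\rfloor$ parts, each of size $2$ (with a single leftover vertex attached to one of the pairs if $n$ is odd, producing a part of size $3$). Each such part $P$ has $|P|\ge 2$, and since any two distinct vertices are adjacent, every $x\in V$ has a neighbour inside whatever part contains it, and trivially has a neighbour inside every other part as well; hence each class is a total dominating set.

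There is no real obstacle here: the only subtlety is remembering to rule out singletons in the total case and to handle the parity of $n$ when constructing the explicit total domatic partition. Both are standard and require no computation beyond the observation that $n\ge r$ forces $K_n$-like adjacency in $H$.
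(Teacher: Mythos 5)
Your proposal is correct and follows essentially the same route as the paper: singletons form the domatic partition giving $d(H)=n$, pairs (with the leftover vertex absorbed into one pair when $n$ is odd) give $d_t(H)\geq\lfloor n/2\rfloor$, and the impossibility of a singleton total dominating set gives the matching upper bound. Your version is slightly more careful in stating the trivial upper bound $d(H)\le n$ and in justifying adjacency by extending $\{u,v\}$ to an $r$-subset, but the argument is the same.
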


\begin{proof}
$H$ is complete, so each vertex is adjacent to all other vertices. 
Evidently, each vertex would constitute a dominating set $V_d$ of $H$ and 
we can partition the vertex set $V$ into $n$ disjoint dominating sets.  
This is the maximum possible domatic partition of $V$. Hence, $d(H)=n$. 

For the total domatic partition, consider two arbitrary vertices $x$ and $y$. 
The vertices $x$ and $y$ are adjacent to each other and also adjacent to all other vertices in $H$.  
Thus, every vertex in $H$ is adjacent to at least one vertex in $\{x,y\}$. 
As a result, the set $\{x, y\}$ is a total dominating set. 
Consequently, we can partition the vertex set $V$ into $\lfloor \frac{n}{2}\rfloor$ mutually exclusive pairs 
such that each pair would be a total dominating set. 
In case $n$ is odd, after constructing $\lfloor \frac{n}{2}\rfloor$ mutually 
exclusive pairs we can assign the leftover vertex to any pair. 
Thus, $d_t(H)\geq \lfloor \frac{n}{2}\rfloor$.
Since one vertex can not be adjacent to itself, we can not construct a total dominating set 
with a single vertex. Hence, $d_t(H)= \lfloor \frac{n}{2}\rfloor$.
\end{proof}
\begin{theorem}
For any complete bipartite $r$-uniform ($3\leq r\leq n$) hypergraph with $n(n\geq 3)$ vertices, the domatic number 
and the total domatic number are $n$ and $\lfloor \frac{n}{2}\rfloor$ respectively.
\end{theorem}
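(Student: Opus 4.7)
My plan is to reduce Theorem 2.2 to Theorem 2.1 by showing that in the complete bipartite $r$-uniform hypergraph under consideration, every pair of distinct vertices is adjacent. Once this is established, the argument used to prove Theorem 2.1 carries over verbatim: singletons form dominating sets (yielding $d(H) \geq n$, and the matching bound $d(H) \leq n$ is trivial), while unordered pairs form total dominating sets (yielding $d_t(H) \geq \lfloor n/2 \rfloor$), with the same observation that no single vertex can be a total dominating set giving the reverse inequality.

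The work therefore concentrates on verifying pairwise adjacency. Let $V = V_1 \cup V_2$ be the bipartition, with both parts nonempty, and take distinct vertices $u, v \in V$. If $u \in V_1$ and $v \in V_2$, I will pick any $r-2$ additional vertices from the remaining $n-2 \geq r-2$ vertices; the resulting $r$-subset meets both parts and is therefore a hyperedge. If instead $u, v \in V_1$, then using $|V_2| \geq 1$ I will augment $\{u,v\}$ with some $w \in V_2$ and fill in $r-3$ further vertices from the remaining $n-3 \geq r-3$ vertices of $V$; this is precisely the step where the hypothesis $r \geq 3$ is essential. The case $u, v \in V_2$ is symmetric.

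Once universal adjacency is in hand, the proof of Theorem 2.1 applies almost word for word. Each vertex dominates every other, so partitioning $V$ into its $n$ singletons produces a domatic partition of maximum possible size; hence $d(H) = n$. For the total domatic number, any unordered pair $\{x,y\}$ of distinct vertices is mutually adjacent and also adjacent to every remaining vertex, so $\{x,y\}$ is a total dominating set; splitting $V$ into $\lfloor n/2 \rfloor$ such pairs (absorbing the leftover vertex into one pair when $n$ is odd) gives $d_t(H) \geq \lfloor n/2 \rfloor$, and no singleton can be a total dominating set since a vertex is not adjacent to itself, forcing equality.

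The only real obstacle is the same-part subcase of the adjacency claim, where one must check that a legitimate hyperedge containing both chosen vertices and at least one vertex from the opposite part can actually be assembled. The hypotheses $r \geq 3$ and $r \leq n$ in the statement are exactly what makes this assembly possible; everything else is a straightforward transcription of the argument for Theorem 2.1.
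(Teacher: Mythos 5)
Your proposal is correct and follows essentially the same route as the paper: establish that every pair of distinct vertices is adjacent (using $r\geq 3$ for the same-part case) and then transplant the argument of Theorem 2.1. The only difference is that you spell out the explicit construction of the witnessing hyperedges, which the paper merely asserts.
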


\begin{proof}
Let, $H(X,Y,E)$ be a complete bipartite $r$-uniform hypergraph with $n$ vertices. 
Since the value of $r$ is at least $3$, any two vertices either from $X$ or from $Y$ would be part of some hyperedge. 
Now, consider two arbitrary vertices $x$ and $y$ such that $x\in X$ and $y\in Y$. 
Clearly, $x$ and $y$ are also part of some hyperedges in $H$. 
We can conclude that, each vertex in $H$ is adjacent to all other vertices. 
Hence, each vertex would be a dominating set, so the domatic number $d(H)=n$.
Like the previous theorem, we can prove that each vertex pair $(x,y)$ in $H$ would be a total dominating set 
and the total domatic number $d_t(H)$ is $\lfloor \frac{n}{2}\rfloor$.\\
\end{proof}

\section{Edge-Domatic Number of Uniform Hypergraphs}
In this section, we illustrate the inequalities and some tight bounds of the edge-domatic number for certain uniform hypergraphs.
\begin{lemma}
Let $H(V,E)$ be a $r$-uniform hypergraph without an isolated vertex, having $n$ vertices and $m$ hyperedges. 
If $r>\lfloor \frac{n}{2}\rfloor$ then, $ed(H)=m$.
\end{lemma}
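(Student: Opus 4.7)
The plan is to show that the hypothesis $r>\lfloor n/2\rfloor$ forces every two hyperedges to share at least one vertex, whence every singleton subset of $E$ already forms an edge-dominating set and the trivially maximum partition of $E$ into $m$ singleton classes is a valid edge-domatic partition.

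First, I would pick any two hyperedges $e_1,e_2\in E$. Since each has size $r$ and both are subsets of the $n$-vertex set $V$, the inclusion-exclusion inequality gives $|e_1\cap e_2|\ge |e_1|+|e_2|-|V|=2r-n$. The hypothesis $r>\lfloor n/2\rfloor$ implies $2r\ge n+1$ (handling the parity of $n$ by considering the floor carefully), so $|e_1\cap e_2|\ge 1$. Thus any two hyperedges of $H$ are adjacent.

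Consequently, for every hyperedge $e\in E$, the singleton $\{e\}$ is an edge-dominating set: every other hyperedge of $H$ shares a vertex with $e$ and therefore is adjacent to it. Partitioning $E$ into its $m$ singletons thus yields an edge-domatic partition of size $m$, giving $ed(H)\ge m$. The reverse inequality $ed(H)\le m$ is immediate since a partition of $E$ can have at most $|E|=m$ nonempty classes, so $ed(H)=m$.

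There is no serious obstacle here; the only subtle point is to verify the arithmetic that $r>\lfloor n/2\rfloor$ really does force $2r>n$ in both the even and odd cases (if $n=2k$ then $r\ge k+1$ and $2r\ge n+2$; if $n=2k+1$ then $r\ge k+1$ and $2r\ge n+1$). The role of the ``no isolated vertex'' hypothesis is only to keep the hypergraph non-degenerate; it is not invoked in the counting argument itself.
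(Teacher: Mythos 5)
Your proof is correct and follows essentially the same route as the paper: both establish via a counting (pigeonhole/inclusion--exclusion) argument that $r>\lfloor n/2\rfloor$ forces any two hyperedges to intersect, so every singleton $\{e\}$ is an edge-dominating set and the partition into $m$ singletons is optimal. Your version is slightly more careful in checking the parity arithmetic and in stating the trivial upper bound $ed(H)\le m$, but the underlying idea is identical.
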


\begin{proof}
Consider two arbitrary hyperedges $e_i$ and $e_j$. As $r>\lfloor \frac{n}{2}\rfloor$, by pigeonhole principle $e_i$ and $e_j$ have at least one common vertex. 
Therefore, every hyperedge is adjacent to all other hyperedges. Thus, each hyperedge would constitute an edge-dominating set. 
Henceforth, $ed(H)=m$. 
\end{proof}

\begin{theorem}
Let, $H(V,E)$ be a complete $r$-uniform hypergraph with $n$ vertices then,\\
$ed(H)=C(n,r)$, if $r>\lfloor \frac{n}{2}\rfloor$\\
\hspace*{.4in}$=\frac{C(n,r)}{\frac{n}{r}} $, if $r\leq \lfloor \frac{n}{2}\rfloor$ and $n$ is divisible by $r$.\\
\end{theorem}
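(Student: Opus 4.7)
The plan is to handle the two cases separately. The first case, $r > \lfloor n/2 \rfloor$, is immediate from the preceding Lemma: a complete $r$-uniform hypergraph on $n$ vertices has exactly $C(n,r)$ hyperedges and no isolated vertex, so $ed(H) = m = C(n,r)$. I would simply invoke the Lemma here.

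For the second case, where $r \le \lfloor n/2 \rfloor$ and $r$ divides $n$, I would split the argument into a matching upper and lower bound on $ed(H)$. For the upper bound $ed(H) \le C(n,r)/(n/r)$, I would first show that every edge-dominating set in $H$ has at least $n/r$ edges. Let $E_d$ be such a set with $|E_d|=k$, and let $U$ be the union of the vertex sets of the edges in $E_d$; then $|U| \le rk$. If $|V \setminus U| \ge r$, then any $r$-subset of $V \setminus U$ is a hyperedge of $H$ sharing no vertex with any edge of $E_d$, contradicting domination. Hence $|U| \ge n-r+1$, so $k \ge (n-r+1)/r$, and since $r \mid n$ the ceiling rounds this up to $k \ge n/r$. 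An edge-domatic partition with $\ell$ classes must therefore satisfy $\ell \cdot (n/r) \le C(n,r)$, giving $ed(H) \le C(n,r)/(n/r)$.

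For the matching lower bound, I would construct an edge-domatic partition with exactly $C(n,r)/(n/r) = C(n-1,r-1)$ classes by appealing to Baranyai's theorem: since $r \mid n$, the edge set of the complete $r$-uniform hypergraph $K_n^{(r)}$ admits a partition into $C(n-1,r-1)$ perfect matchings, each consisting of $n/r$ pairwise-disjoint hyperedges whose union is all of $V$. Each such perfect matching $M$ is an edge-dominating set, because every hyperedge $f$ has $r$ vertices and these must lie in the union of the edges of $M$, forcing $f$ to share a vertex with at least one edge of $M$. This gives an edge-domatic partition of the required size, and combined with the upper bound proves the equality.

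The main obstacle is unquestionably the lower bound: producing a partition of the full edge set into perfect matchings. The vertex-side covering bound is routine, but exhibiting $C(n-1,r-1)$ vertex-disjoint matchings simultaneously is precisely the content of Baranyai's theorem, which is a nontrivial existence result (for $r=2$ it reduces to the standard $1$-factorization of $K_n$ when $n$ is even). If one wished to avoid quoting Baranyai, an explicit combinatorial construction of the matchings would be required, and that is where the technical difficulty of a self-contained proof would sit.
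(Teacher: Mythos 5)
Your proposal is correct and follows essentially the same route as the paper: case 1 by invoking the preceding lemma, and case 2 by pairing a lower bound from a partition of $E$ into perfect matchings (each a vertex-covering, hence edge-dominating, set of $n/r$ disjoint hyperedges) with an upper bound showing any edge-dominating set must cover at least $n-r+1$ vertices and hence contain at least $n/r$ edges. The one substantive difference is that you correctly identify the lower bound's key step --- partitioning \emph{all} of $E$ into $C(n-1,r-1)$ perfect matchings --- as Baranyai's theorem and cite it, whereas the paper merely asserts ``similarly, we can form different mutually exclusive sets'' without justification; on this point your write-up is actually more rigorous than the paper's own proof.
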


\begin{proof}
The number of hyperedges of a complete $r$-uniform hypergraph with $n$ vertices is $C(n,r)$.\\
For $r>\lfloor \frac{n}{2}\rfloor$: from the previous lemma we know that each edge-domatic partition 
consists of one hyperedge. Hence, $ed(H)=C(n,r)$.

For $r\leq \lfloor \frac{n}{2}\rfloor$ and $n$ is divisible by $r$:\\ 
Let, $V=\{v_1, v_2, \cdots, v_n\}$. Consider a subset of hyperedges;
$S=\{e_1, e_2, \cdots, e_{n/r}\}$ such that,\\
$e_1=\{v_1,v_2, \cdots, v_r\}, e_2=\{v_{r+1},v_{r+2}, \cdots, v_{2r}\},\cdots, e_{n/r}=\{v_{n-r+1}, \cdots, v_n\}$. 
We can notice that, $S$ covers all vertices of $H$, so each hyperedge in $(E-S)$ is adjacent to at least one edge in $S$.
As a result, $S$ becomes an edge-dominating set. Similarly, we can form different mutually exclusive sets of $(\frac{n}{r})$ hyperedges, 
each of these sets would cover all vertices of $H$. Moreover, each of these sets are edge dominating sets of $H$. 
Hence, $ed(H)\geq \frac{C(n,r)}{\frac{n}{r}}$. Suppose, $ed(H)\geq \frac{C(n,r)}{\frac{n}{r}}+1$. 
Then, there must be some edge-dominating sets that 
would have at most $(\frac{n}{r} -1)$ hyperedges. Let, $C$ be such an edge-dominating set. 
Clearly, $C$ covers at most $(n-r)$ vertices. As a result, there would be at least $r$ vertices left untraversed. 
The hyperedge containing those $r$ vertices is adjacent to none of the hyperedges in $C$. 
Therefore, $C$ can not be an edge-dominating set. 
Hence, $ed(H)=\frac{C(n,r)}{\frac{n}{r}}$.\\

\end{proof}

\begin{theorem}
Let, $H(X,Y,E)$ be a complete bipartite $r$-uniform hypergraph,\\
ed(H)=$\frac{|E|}{2},  \text{if } |X|=|Y|=r$.\\
\hspace*{.3in} $\geq \frac{|E|}{2k}$, if $|X|=|Y|=kr$, $k$ is a constant and $k\geq 1$.\\
\hspace*{.3in} $\geq \frac{|E|}{2*\frac{min(|X|, |Y|)}{r}}$, if $|X|=k_1r$ and $|Y|=k_2r$, $k_1\neq k_2$.\\
\end{theorem}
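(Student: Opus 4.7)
The plan is to prove each of the three bounds in turn, exhibiting an explicit edge-domatic partition for each; Part~1 additionally requires a matching upper bound. The unifying idea is that any family of hyperedges whose vertex-union contains all of $X$ (equivalently, all of $Y$) is automatically an edge-dominating set, since every hyperedge of $H$ meets both sides of the bipartition.

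For Part~1 ($|X|=|Y|=r$), I would first characterize when two hyperedges are disjoint. Writing a hyperedge as $A\cup B$ with $A\subseteq X$, $B\subseteq Y$, $|A|,|B|\geq 1$ and $|A|+|B|=r$, two distinct hyperedges $e_1=A_1\cup B_1$ and $e_2=A_2\cup B_2$ are vertex-disjoint only if $A_1\cap A_2=\emptyset$ and $B_1\cap B_2=\emptyset$; since $|A_1|+|A_2|\leq r$, $|B_1|+|B_2|\leq r$ and the two sums total $2r$, both must equal $r$, so $A_1,A_2$ partition $X$ and $B_1,B_2$ partition $Y$. This forces $e_2=V\setminus e_1$, so each $e$ has a unique non-adjacent ``complement'' $\bar e$. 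Consequently $D\subseteq E$ is edge-dominating iff $|D|\geq 2$: a singleton $\{f\}$ fails to dominate $\bar f$, whereas if $|D|\geq 2$, then for every $e\notin D$ the set $D$ contains some element different from $\bar e$, which is therefore adjacent to $e$. Hence every class of an edge-domatic partition has at least two elements, yielding $ed(H)\leq |E|/2$; pairing each $e$ with $\bar e$ attains this bound, using that $|E|=\binom{2r}{r}-2$ is even.

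For Part~2 ($|X|=|Y|=kr$), I build each dominating class as a partition of $V$ into $2k$ disjoint hyperedges. To choose such a partition, I pick ``$X$-side'' sizes $a_1,\ldots,a_{2k}\in\{1,\ldots,r-1\}$ summing to $kr$ (for instance alternating $\lceil r/2\rceil$ and $\lfloor r/2\rfloor$), partition $X$ into parts of these sizes, and partition $Y$ into complementary parts of sizes $r-a_i$ whose total $kr$ matches $|Y|$. Each class then covers $V$ and is therefore edge-dominating. For Part~3 ($|X|=k_1r$, $|Y|=k_2r$, WLOG $k_1<k_2$), the analogous construction uses only $2k_1$ hyperedges whose $X$-parts partition $X$; the $Y$-parts consume $k_1r\leq k_2r$ vertices of $Y$ and can be chosen disjoint. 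Covering $X$ alone suffices to make each class edge-dominating. Following the ``mutually exclusive sets'' argument of Theorem~3.2, I then tile $E$ with disjoint covering classes of size $2k$ (resp.\ $2k_1$), obtaining at least $|E|/(2k)$ (resp.\ $|E|/(2k_1)=|E|/(2\min(|X|,|Y|)/r)$) classes.

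The main obstacle I anticipate is rigorously justifying this last tiling step---namely that $E$ can actually be partitioned into mutually disjoint covering classes of the claimed size. This is a resolvable-design statement analogous to Baranyai's theorem, now in the complete bipartite setting. The cleanest route is an inductive/greedy argument: as long as $E$ is nonempty, remove one covering class and verify that the residual bipartite hypergraph still admits another. If counting forces a final class with fewer than $2k$ (resp.\ $2k_1$) hyperedges, its few members can be absorbed into an earlier class without destroying domination, which still delivers the stated lower bound.
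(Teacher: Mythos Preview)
Your approach is essentially the same as the paper's: in Case~1 you pair each hyperedge with its complement and rule out singletons, and in Cases~2 and~3 you cover $X$ (or all of $V$) with $2k$ (resp.\ $2k_1$) hyperedges and then tile $E$ with such covering classes. Your unique-complement characterization in Case~1 (that any two hyperedges already dominate) is a bit cleaner than the paper's explicit pair, and you are right to flag the resolvability/tiling step in Cases~2 and~3---the paper simply asserts the partition of $E$ without further justification.
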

\begin{proof}
\textbf{Case 1:}
Here $|X|=|Y|=r$: Let $X=\{x_1, x_2, \cdots, x_r\}$ and $Y=\{y_1, y_2, \cdots, y_r\}$. 
Consider a set $C$ of two hyperedges, namely $e_1$ and $e_2$. Specifically, \\ $e_1=\{x_1, y_1, y_2, \cdots, y_{r-1} \}$ and 
$e_2=\{x_2, x_3,\cdots, x_r, y_r \}$. As $C$ covers all vertices of $H$, so all hyperedges in $(E-C)$ are 
adjacent to both $e_1$ and $e_2$. Thus $C$ is an edge-dominating set. Consequently, we can partition $E(H)$ into $(|E|/2)$ 
mutually exclusive edge-dominating sets such that each consists of two vertex disjoint hyperedges. 
Thus, $ed(H)\geq \frac{|E|}{2}$. 
Assume for the sake of contradiction that $ed(H)\geq \frac{|E|}{2}+1$. 
This implies there must be an edge dominating set $C_1$ with one hyperedge. As a result, 
$C_1$ covers $r$ vertices. Moreover, the hyperedge in $C_1$ is not adjacent to the hyperedge consists of rest $r$ vertices. 
Therefore, $C_1$ is not an edge dominating set. This is a contradiction. Hence, $ed(H)=\frac{|E|}{2}$.\\

\textbf{Case 2:} $|X|=|Y|=kr$, $k$ is a constant and $k\geq 1$. For simplicity purpose assume $k=2$. 
Let $X={x_1, x_2,\cdots, x_{2r}}$ and $Y={y_1, y_2,\cdots, y_{2r}}$. 
Now, we construct an edge dominating set $C=\{e_1, e_2, e_3, e_4\}$ such that,\\
$e_1=\{x_1, y_1, y_2, \cdots, y_{r-1} \}, e_2=\{x_2, x_3,\cdots, x_r, y_r \}, 
e_3=\{x_{r+1}, y_{r+1}, y_{r+2}, \cdots, y_{2r-1} \}$ and $e_4=\{x_{r+2}, x_{r+3},\cdots, x_{2r}, y_{2r} \}$. 
Note that $C$ covers all vertices of $H$, so all hyperedges in $(E-C)$ are adjacent to at least one hyperedge in $C$. 
Hence $C$ is an edge-dominating set.
Likewise, we can partition $E$ into $|E|/4$ edge-dominating sets such that, each consists of four vertex-disjoint hyperedges. 
In general, for an arbitrary value of $k$ we can construct edge-dominating sets with $2k$ hyperedges so that, 
each edge-dominating set covers all vertices of $H$. Subsequently, $E(H)$ can be partitioned into $|E|/2k$ edge-dominating sets. 
Hence, ed$(H)\geq \frac{|E|}{2k}$.\\

\textbf{Case 3:} Here, $|X|=k_1r$ and $|Y|=k_2r$, $k_1\neq k_2$. Without loss of generality, 
let us assume $k_1<k_2$. 
Now, we construct an edge-dominating set $C$ by taking $k_1=2$ and $k_2=3$ as follows: 
Let, $X={x_1, x_2,\cdots, x_{2r}}$ and $Y={y_1, y_2,\cdots, y_{3r}}$.\\
$C=\{e_1, e_2, e_3, e_4\}$ such that,\\ $e_1=\{x_1, y_1, y_2, \cdots, y_{r-1} \}, e_2=\{x_2, x_3,\cdots, x_r, y_r \}, 
e_3=\{x_{r+1}, y_1, y_2, \cdots, y_{r-1} \}$ and $e_4=\{x_{r+2}, x_{r+3},\cdots, x_{2r}, y_r \}$. 
It is clear that $C$ covers all vertices in $X$. Thus, every hyperedge in $(E-C)$ are adjacent to at 
least one hyperedge in $C$. This implies $C$ is an edge-dominating set. Likewise, we can construct 
other edge dominating sets by taking four hyperedges that cover all vertices in $X$. 
For arbitrary values of $k_1$ and $k_2$, edge dominating sets can be constructed 
by taking $2k_1$ hyperedges those covers all vertices in $X$. 
Hence, $ed(H)\geq \frac{|E|}{2k_1}$.\\
\end{proof}

\paragraph{}Following lemma presents a better bound for the edge-domatic number of 
a complete bipartite $3$-uniform hypergraph.\\




\begin{lemma}
The edge-domatic number of a complete bipartite $3$-uniform hypergraph $H(X,Y,E)$ with even number of vertices in each subset and $|X|=|Y|=k$ is 
$\frac{|E|}{k/2}$.
\end{lemma}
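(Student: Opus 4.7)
My plan is to produce an explicit partition of $E$ into $|E|/(k/2) = 2k(k-1)$ edge-dominating sets of size $k/2$ and then show that no edge-dominating set can be smaller than $k/2$. Since $r=3$, every hyperedge is either of type $A$ (two vertices in $X$, one in $Y$) or of type $B$ (one vertex in $X$, two in $Y$), which gives $|E| = 2k\binom{k}{2} = k^2(k-1)$ and the target count $2k(k-1)$. The key structural ingredient is that, because $k$ is even, $K_k$ admits a $1$-factorization: an edge-partition into $k-1$ perfect matchings $M_1,\ldots,M_{k-1}$, each consisting of $k/2$ pairs.

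For the lower bound I would apply a $1$-factorization on $X$ and, for each $(M_i, y) \in \{M_1,\ldots,M_{k-1}\} \times Y$, form the class $C_{i,y} = \{\{x_a,x_b,y\} : \{x_a,x_b\} \in M_i\}$. Each such class has exactly $k/2$ hyperedges and covers all of $X$, so by the same reasoning used in Theorem~3.3 every hyperedge of $H$ is adjacent to some hyperedge in $C_{i,y}$, making $C_{i,y}$ edge-dominating. These $k(k-1)$ classes partition the type-$A$ hyperedges because each unordered pair $\{x_a,x_b\}$ lies in a unique $M_i$. Repeating the construction with a $1$-factorization of $K_k$ on $Y$ handles the type-$B$ hyperedges and yields $k(k-1)$ further classes. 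Altogether this is a partition of $E$ into $2k(k-1) = |E|/(k/2)$ edge-dominating sets, giving $ed(H) \geq |E|/(k/2)$.

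For the matching upper bound I would take any edge-dominating set $C$ and set $a = |X \setminus V(C)|$, $b = |Y \setminus V(C)|$. If $a \geq 2$ and $b \geq 1$ then the $3$-set formed by two uncovered $X$-vertices and one uncovered $Y$-vertex is a hyperedge disjoint from $V(C)$, contradicting domination; symmetrically if $a \geq 1$ and $b \geq 2$. Hence $a=0$, $b=0$, or $a=b=1$. In the first two cases $C$ covers a side of size $k$ and each hyperedge contributes at most two vertices to that side, so $|C| \geq k/2$. In the third case $|V(C)| \geq 2k-2$ and $|V(C)| \leq 3|C|$ give $|C| \geq \lceil (2k-2)/3 \rceil \geq k/2$ for every even $k \geq 2$. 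Thus every edge-domatic partition has at most $|E|/(k/2)$ classes, matching the construction.

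The main obstacle is arranging the type-$A$ (and type-$B$) hyperedges into a clean simultaneous partition where every class has size exactly $k/2$ and covers an entire side; the $1$-factorization of $K_k$ is what makes this possible, which is precisely why the parity hypothesis on $k$ appears in the statement. The upper bound is then a short structural argument once the possible shapes of uncovered vertex sets are pinned down.
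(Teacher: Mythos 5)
Your proof is correct and follows essentially the same strategy as the paper's: for the lower bound, classes of $k/2$ hyperedges that partition one side of the bipartition and share an anchor vertex on the other side, and for the upper bound, the observation that fewer than $k/2$ hyperedges must leave enough vertices of both sides uncovered to contain an undominated hyperedge. The only substantive difference is that you invoke a $1$-factorization of $K_k$ to exhibit the full partition of $E$ into such classes explicitly --- a detail the paper merely asserts, and exactly the point where the evenness of $k$ is used.
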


\begin{proof}
Consider a subset $C_1$ of hyperedges of $H$ having $\frac{k}{2}$ hyperedges such that $(i)$ each hyperedge in $C_1$ takes two vertices 
from $X$ and one vertex from $Y$. $(ii)$ they have no common vertex in $X$. 
$(iii)$ they share a common vertex in $Y$. 
Note that $C_1$ covers all vertices in $X$. This implies each hyperedge in $(E-C_1)$ is adjacent to at least one hyperedge in $C_1$. 
Thus, $C_1$ is an edge-dominating set. 

Similarly, we can generate another edge-dominating set $C_2$ with 
$\frac{k}{2}$ hyperedges suh that each hyperedge has two vertices from $Y$ and one vertex from $X$. Furthermore, 
$C_2$ covers all vertices in $Y$. 

Now, we can partition $E$ into different edge-dominating sets, each having 
$\frac{k}{2}$ hyperedges. Consequently, $ed(H)\geq \frac{|E|}{k/2}$. 

Assume for the sake of contradiction that $ed(H)\geq \frac{|E|}{k/2}+1$. 
This implies there exist some edge-dominating sets with less than $\frac{k}{2}$ hyperedges. Let, $C_3$ be such an edge-dominating set. 
Clearly, $C_3$ can cover at most $(k-2)$ vertices from both $X$ and $Y$ and leaving at least two vertices untraversed 
from both $X$ and $Y$. Since, we would have a hyperedge consisting of those untraversed vertices, $C_3$ can not 
be an edge-dominating set, leading to a contradiction. Hence, $ed(H)=\frac{|E|}{k/2}$.
\end{proof}

\begin{lemma}
Let $H(X,Y,E)$ be a complete bipartite $r$-uniform hypergraph. If $|X|=|Y|=r$ and $\delta$ be the 
degree of each vertex. Then, ed($H$)=$\delta$.
\end{lemma}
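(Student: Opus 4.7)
The plan is to observe that this lemma is essentially a reformulation of Case~1 of Theorem~3.2, so the task reduces to showing that the common vertex degree $\delta$ coincides with $|E|/2$ in this particular hypergraph. Once that identity is in hand, the equality $ed(H) = |E|/2$ already proved for $|X|=|Y|=r$ immediately yields $ed(H) = \delta$.

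First I would enumerate the hyperedges. Because $H$ is complete bipartite and $r$-uniform with $|X|=|Y|=r$, every $r$-subset of $X\cup Y$ that meets both $X$ and $Y$ is a hyperedge, and conversely. There are $\binom{2r}{r}$ such $r$-subsets in total, and exactly two of them (namely $X$ itself and $Y$ itself) lie entirely on one side, so $|E| = \binom{2r}{r} - 2$. Next I would compute $\delta$. Fix a vertex $v$, say $v\in X$. A hyperedge through $v$ is obtained by picking $r-1$ further vertices from the remaining $2r-1$ vertices, subject to including at least one vertex of $Y$. The number of $(r-1)$-subsets of $(X\cup Y)\setminus\{v\}$ is $\binom{2r-1}{r-1}$, and exactly $\binom{r-1}{r-1}=1$ of them contains no vertex of $Y$ (namely $X\setminus\{v\}$). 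Hence $\delta = \binom{2r-1}{r-1} - 1$, and by the $X\leftrightarrow Y$ symmetry the same value is obtained for $v\in Y$, which justifies the lemma's claim that every vertex has a common degree.

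Finally I would chain these together using the identity $\binom{2r}{r} = 2\binom{2r-1}{r-1}$, which follows from Pascal's rule together with the symmetry $\binom{2r-1}{r-1} = \binom{2r-1}{r}$. Dividing by $2$ gives $|E|/2 = \binom{2r-1}{r-1} - 1 = \delta$. Combining this with $ed(H) = |E|/2$ from Case~1 of Theorem~3.2 yields $ed(H) = \delta$, as required. The only real obstacle is the bookkeeping with binomial coefficients; all of the structural content (the explicit pairing of each edge with its complement and the proof that no class of size one can be edge-dominating) has already been handled in the earlier Case~1 argument, and no new construction or contradiction is needed here.
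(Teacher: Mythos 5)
Your proposal is correct, and it shares the paper's overall strategy of reducing the lemma to Case~1 of the earlier theorem via the identity $|E|=2\delta$; the difference lies entirely in how that identity is obtained. You compute both sides explicitly: $|E|=\binom{2r}{r}-2$ by counting $r$-subsets of $X\cup Y$ that meet both parts, and $\delta=\binom{2r-1}{r-1}-1$ by counting hyperedges through a fixed vertex, then reconcile them with $\binom{2r}{r}=2\binom{2r-1}{r-1}$. The arithmetic checks out. The paper instead uses a one-line handshake argument: the degree sum is $2r\delta$, each hyperedge contributes $r$ to it, so $|E|=2r\delta/r=2\delta$, with no need to evaluate either quantity. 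The paper's route is shorter and more robust: it uses only $r$-uniformity and the hypothesis that every vertex has degree $\delta$, so it does not depend on pinning down exactly which $r$-subsets count as hyperedges (a point the paper never makes fully explicit). Your enumeration commits to the reading that the hyperedges are precisely the $r$-subsets meeting both $X$ and $Y$; that reading is consistent with the constructions elsewhere in the paper, and in exchange your argument actually verifies that the degree is constant rather than assuming it, but it does carry that extra definitional dependence and some binomial bookkeeping that the double-counting argument avoids.
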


\begin{proof}
The hypergraph $H$ has $2r$ vertices each having degree $\delta$, so the sum of degrees 
is $2\delta r$. As, each hyperedge contributes a total of $r$ degrees to $r$ different vertices, 
$|E|=\frac{2\delta r}{r}=2\delta$. 
From case-$1$ of theorem-$5$, $ed(H)=\frac{|E|}{2}=\frac{2\delta}{2}=\delta$.
\end{proof}

\section{\textbf{Total Edge-Domatic Number of Uniform Hypergraphs}}
The results in this section concern the total edge domatic number for complete and complete 
bipartite uniform hypergraphs.
\begin{lemma}
Let $H(V,E)$ be a $r$-uniform hypergraph without an isolated vertex, having $n$ vertices and $m$ hyperedges. 
If $r>\lfloor \frac{n}{2}\rfloor$ then, $ed_t(H)=\lfloor \frac{m}{2}\rfloor$.
\end{lemma}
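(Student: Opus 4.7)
The plan is to imitate the earlier pigeonhole lemma (which gave $ed(H)=m$ under the same hypothesis), while remembering that for a total edge-dominating set, every hyperedge $x$---including those already inside the set---must have a \emph{distinct} neighbor in the set. My first step is the same pigeonhole observation: since $2r>n$, any two distinct hyperedges have combined size exceeding $n$ and hence share at least one vertex, so every pair of distinct hyperedges of $H$ is adjacent.

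The structural consequence I would extract is that any $E_t\subseteq E$ with $|E_t|\geq 2$ is automatically a total edge-dominating set: given any $x\in E$, pick any $y\in E_t\setminus\{x\}$, which exists because $|E_t|\geq 2$, and adjacency of $x,y$ is automatic. Conversely, any singleton $\{e\}$ fails to be a total edge-dominating set because $e$ itself has no distinct adjacent hyperedge inside the set. Therefore every class in a total edge-domatic partition must contain at least two hyperedges.

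For the lower bound, I would split $E$ into $\lfloor m/2\rfloor$ pairs and, when $m$ is odd, drop the leftover hyperedge into any existing pair to form one class of size three. Each resulting class has at least two edges and is therefore a total edge-dominating set, giving $ed_t(H)\geq\lfloor m/2\rfloor$. For the matching upper bound, I would assume for contradiction a total edge-domatic partition with $\lfloor m/2\rfloor+1$ classes; since $m/(\lfloor m/2\rfloor+1)<2$ in both parities, pigeonhole forces at least one singleton class, contradicting the structural claim above, so $ed_t(H)=\lfloor m/2\rfloor$. The only mildly delicate point I anticipate is the parity bookkeeping when $m$ is odd; once the every-pair-is-adjacent principle is in hand, the rest reduces to an elementary partition-into-pairs argument.
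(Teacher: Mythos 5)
Your proof is correct and follows essentially the same route as the paper: the pigeonhole observation that $2r>n$ forces any two hyperedges to intersect, followed by partitioning $E$ into $\lfloor m/2\rfloor$ pairs (absorbing the leftover edge when $m$ is odd). In fact you are slightly more careful than the paper, which asserts the matching upper bound without spelling out that a partition into more classes would force a singleton class, whereas you make that pigeonhole step explicit.
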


\begin{proof}
Consider, an arbitrary subset $C$ of two hyperedges, $C=\{e_i$, $e_j\}$. 
As $r>\lfloor \frac{n}{2}\rfloor$, by pigeonhole principle $e_i$ and $e_j$ have at least one common vertex. Apparently, every hyperedge 
of $H$ is adjacent to at least one hyperedge in $C$. Thus, $C$ becomes a total edge-dominating set. 
Now, we can partition the edge set into $\lfloor \frac{m}{2}\rfloor$ number of classes 
such that each class would be a total edge-dominating set. In fact, each class having two hyperedges if $m$ is even and 
in case $m$ is odd, the extra hyperedge can be allocated to any of the classes. 
Hence, $ed_t(H)=\lfloor \frac{m}{2}\rfloor$.
\end{proof}

\begin{theorem}
Let, $H(V,E)$ be a complete $r$-uniform hypergraph with $n$ vertices. 
If, $r>\lfloor \frac{n}{2}\rfloor$ then, $ed_t(H)=\lfloor \frac{C(n,r)}{2}\rfloor$.\\
\end{theorem}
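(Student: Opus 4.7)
The plan is to recognize that this theorem is essentially a direct corollary of the preceding lemma (the one asserting $ed_t(H)=\lfloor m/2 \rfloor$ for any $r$-uniform hypergraph without isolated vertices satisfying $r>\lfloor n/2 \rfloor$). So the proof should be short and consist mostly of verifying the hypotheses of that lemma in the special case of a complete $r$-uniform hypergraph.

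First I would observe that a complete $r$-uniform hypergraph $H$ on $n$ vertices has exactly $m = C(n,r)$ hyperedges, since every $r$-subset of $V$ is an edge. Next, I would verify that $H$ has no isolated vertex: each vertex $v \in V$ belongs to $C(n-1, r-1) \geq 1$ hyperedges whenever $n \geq r$, which holds here since we assume $r \leq n$. Then, since the hypothesis $r > \lfloor n/2 \rfloor$ holds by assumption, all hypotheses of the preceding lemma are satisfied.

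Applying that lemma directly yields $ed_t(H) = \lfloor m/2 \rfloor = \lfloor C(n,r)/2 \rfloor$, completing the proof. The main content underlying the theorem, namely that any two hyperedges must intersect because $2r > n$, is already encapsulated in the previous lemma via the pigeonhole argument, so there is no real obstacle here; the only thing to be careful about is to point out explicitly that the completeness of $H$ ensures (i) the correct count of hyperedges and (ii) the absence of isolated vertices, so that invoking the lemma is legitimate.
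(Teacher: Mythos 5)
Your proposal is correct and follows essentially the same route as the paper: the paper's proof likewise notes that completeness rules out isolated vertices and then applies the preceding lemma to the $m=C(n,r)$ hyperedges. Your extra verification that each vertex lies in $C(n-1,r-1)\geq 1$ hyperedges only makes the argument slightly more explicit.
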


\begin{proof}
As, $H$ is complete, it has no isolated vertex. Now, applying previous lemma, 
$ed_t(H)=\lfloor\frac{C(n,r)}{2}\rfloor$.
\end{proof}

\begin{theorem}
Let, $H(X,Y,E)$ be a complete bipartite $r$-uniform hypergraph with $r$ vertices in each subset. 
Then $ed_t(H)=\frac{|E|}{2}$.\\
\end{theorem}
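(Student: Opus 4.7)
The plan is to sandwich $ed_t(H)$ between $|E|/2$ from above and below. The upper bound is immediate: every total edge-dominating partition is in particular an edge-dominating partition, so $ed_t(H) \leq ed(H)$, and Case 1 of Theorem 3.5 gives $ed(H) = |E|/2$. Thus $ed_t(H) \leq |E|/2$.

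The core observation needed for the lower bound is that any two distinct hyperedges $e_1, e_2$ of $H$ that share at least one common vertex already form a total edge-dominating set $\{e_1, e_2\}$. Indeed, $e_1$ and $e_2$ are adjacent by hypothesis, and since $|e_1 \cap e_2| \leq r-1$ we have $|V \setminus (e_1 \cup e_2)| = |e_1 \cap e_2| \leq r-1 < r$, so no hyperedge of $H$ (all of which have size $r$) can fit inside $V \setminus (e_1 \cup e_2)$; hence every $e \in E$ meets $e_1 \cup e_2$ and is adjacent to one of $e_1, e_2$.

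It therefore suffices to exhibit a partition of $E$ into $|E|/2$ pairs of intersecting hyperedges. A second observation carries the burden: since $|e_1| + |e_2| = 2r = |V|$, two distinct hyperedges of $H$ fail to intersect precisely when they are complementary in $V$, and the map $e \mapsto V \setminus e$ is a fixed-point-free involution on $E$ (the complement of a valid hyperedge still has $r$ vertices with at least one from each of $X$ and $Y$). Writing the resulting complementary pairs as $\{a_i, \overline{a_i}\}_{i=1}^{m}$ with $m = |E|/2 \geq 2$, I would form the partition $\{P_i\}_{i=1}^{m}$ by $P_i = \{a_i, \overline{a_{i+1}}\}$ with indices taken modulo $m$. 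Each $P_i$ consists of two distinct hyperedges whose representatives $a_i, a_{i+1}$ lie in different complementary pairs, so $a_i$ and $\overline{a_{i+1}}$ are not complementary and therefore intersect; every element of $E$ appears in exactly one $P_i$; and by the first observation each $P_i$ is a total edge-dominating set. Consequently $ed_t(H) \geq |E|/2$.

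The main obstacle is the pairing itself. The natural partition suggested by Case 1 of Theorem 3.5 uses precisely the complementary (non-intersecting) pairs $\{e, V \setminus e\}$, which are edge-dominating but not total edge-dominating. The cyclic-shift construction above twists that pairing just enough to force every pair to intersect while still covering the entire edge set, at which point the structural observation finishes the argument.
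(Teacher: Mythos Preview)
Your proof is correct and follows the same overall strategy as the paper: show that any two intersecting hyperedges of $H$ form a total edge-dominating set (since their union misses at most $r-1$ vertices), and then partition $E$ into $|E|/2$ such pairs. Your argument is in fact more complete than the paper's, which simply asserts that such a partition exists; your observation that non-intersecting pairs are exactly complementary pairs, together with the cyclic-shift pairing $P_i=\{a_i,\overline{a_{i+1}}\}$, explicitly supplies what the paper leaves implicit.
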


\begin{proof}
Consider two arbitrary hyperedges $e_i$ and $e_j$ such that they have at least one common vertex. 
Thus, they are adjacent to each other.  It is clear that $e_i$ and $e_j$ covers at least $(r+1)$ vertices  
and they do not cover at most $(r-1)$ vertices. Therefore, each hyperedge has at least one common vertex with 
$e_i$ or $e_j$ or both. This implies each hyperedge in $H$ is adjacent to either $e_i$ 
or $e_j$ or both. Moreover, the set $\{e_i,e_j\}$ constitutes a total edge dominating set. 
Consequently, we can partition $E$ into $|E|/2$ number of total edge-dominating sets such that 
each contains two hyperedges with at least one common vertex. 
We can not a have total edge-dominating set with only one hyperedge because, one vertex can not be adjacent to 
itself. Hence, $ed_t(H)=\frac{|E|}{2}$ (we can notice that |E| is even here).\\
\end{proof}



\begin{lemma}
The total edge-domatic number of a complete bipartite $3$-uniform hypergraph $H(X,Y,E)$ with even number of vertices in each subset and $|X|=|Y|=k$ is 
$\frac{|E|}{k/2}$.
\end{lemma}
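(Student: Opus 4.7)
The plan is to mirror the construction used in the preceding lemma on $ed(H)$ for this same hypergraph, now verifying that each class of the partition is in fact a \emph{total} edge-dominating set, and to pair it with a counting lower bound on the size of any total edge-dominating set. For the lower bound $ed_t(H)\geq |E|/(k/2)$, I would split the hyperedges into the $k\binom{k}{2}$ ``type $A$'' edges (two vertices in $X$, one in $Y$) and the $k\binom{k}{2}$ ``type $B$'' edges, and partition each half separately. Fixing $y\in Y$ and a $1$-factorization $M_1,\dots,M_{k-1}$ of the complete graph on $X=\{x_1,\dots,x_k\}$---which exists because $k$ is even---the family $C_{y,j}:=\{\,\{u,v,y\}:\{u,v\}\in M_j\,\}$ has exactly $k/2$ hyperedges, covers all of $X$, and every two of its members share the vertex $y$. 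Thus $C_{y,j}$ dominates every hyperedge outside itself through an $X$-vertex and every hyperedge inside itself through $y$ (this last step uses $|C_{y,j}|\geq 2$, i.e.\ $k\geq 4$), so $C_{y,j}$ is a total edge-dominating set. As $j$ ranges over $1,\dots,k-1$ and $y$ over $Y$, the sets $C_{y,j}$ partition all type-$A$ edges into $k(k-1)$ classes; the symmetric construction with the roles of $X$ and $Y$ swapped handles the type-$B$ edges, for a total of $2k(k-1)=|E|/(k/2)$ classes.

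Next, for the matching upper bound I would show that every total edge-dominating set $C$ has $|C|\geq k/2$. Writing $V(C)=\bigcup_{e\in C}e$, the set $C$ is edge-dominating exactly when $V\setminus V(C)$ contains no $3$-subset with at least one vertex in each of $X$ and $Y$; this forces one of three cases: (a) $Y\subseteq V(C)$, (b) $X\subseteq V(C)$, or (c) $|V\setminus V(C)|\leq 2$. Cases (a) and (b) immediately give $|C|\geq k/2$, because each hyperedge contributes at most two vertices to the relevant side. In case (c) one has $|V(C)|\geq 2k-2$, so $3|C|\geq 2k-2$ and $|C|\geq (2k-2)/3\geq k/2$ whenever $k\geq 4$. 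Since the classes of a total edge-domatic partition are pairwise disjoint, the number of classes is at most $|E|/(k/2)$, which matches the lower bound.

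The main obstacle sits in case (c) of the upper bound: a priori, a clever total edge-dominating set might cover both parts except for a single stray vertex on each side, and it is not obvious that such a set is large. The arithmetic $(2k-2)/3\geq k/2$ closes this gap precisely when $k\geq 4$, which is also the regime in which the $1$-factorization of $K_k$ driving the lower-bound construction exists, so both halves of the argument rest on the same evenness-plus-size hypothesis of the lemma.
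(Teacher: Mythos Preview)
Your proof is correct and shares the same core construction as the paper's, but the paper's argument is a terse reduction to the preceding lemma on $ed(H)$, whereas yours is self-contained and considerably more rigorous. The paper simply observes that the $k/2$-element edge-dominating sets built in that lemma (hyperedges covering one part and all sharing a common vertex on the other) are automatically \emph{total} because they pairwise intersect, and that the upper bound carries over since every total edge-dominating set is in particular edge-dominating; it never revisits either direction in detail. Your version adds genuine content at two points the paper glosses over: you invoke a $1$-factorization of $K_k$ to exhibit an explicit partition of \emph{all} of $E$ into such classes (the paper merely asserts such a partition exists), and you replace the paper's sketchy coverage argument for the lower bound on $|C|$ with a clean three-case analysis. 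Your explicit tracking of the hypothesis $k\geq 4$, needed both for the $1$-factorization step and for the inequality $(2k-2)/3\geq k/2$, is also an improvement; the paper does not flag that the stated formula fails at $k=2$.
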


\begin{proof}
Here we use the edge-dominating set constructed in lemma-$6$. It is sufficient to prove, 
the edge-dominating set $C$ in lemma-$6$ is a total edge-dominating set. 
We know each hyperedge in $(E-C)$ is adjacent to at least one hyperedge in $C$. 
As hyperedges in $C$ have a common vertex, so they are adjacent to each other. 
Consequently, each hyperedge in $H$ is adjacent to at least one hyperedge in $C$. 
Thus, $C$ becomes a total edge-dominating set and $ed_t(H)\geq \frac{|E|}{k/2}$. 
It follows from lemma-$6$ that we can not have a total edge-dominating set with 
less than $k/2$ hyperedges. Hence, $ed_t(H)=\frac{|E|}{k/2}$.
\end{proof}

\begin{lemma}
Let $H(X,Y,E)$ be a complete bipartite $r$-uniform hypergraph. If $|X|=|Y|=r$ and $\delta$ be the 
degree of each vertex of $H$. Then $ed_t(H)=\delta$.\\
\end{lemma}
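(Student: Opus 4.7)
The plan is to mimic exactly the degree-counting argument used in the earlier lemma that established $ed(H)=\delta$ under the same hypothesis $|X|=|Y|=r$, but now feeding the count into the preceding theorem that gives $ed_t(H)=|E|/2$ for a complete bipartite $r$-uniform hypergraph with $r$ vertices on each side.

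First I would compute $|E|$ from the degree data by double counting. Since $H$ has $2r$ vertices, each of degree $\delta$, the sum of vertex degrees is $2\delta r$. Each hyperedge of $H$ has size $r$, so contributes $r$ to this sum. Hence
\[
|E| \;=\; \frac{2\delta r}{r} \;=\; 2\delta.
\]

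Next I would invoke the theorem established immediately before this lemma, which asserts that whenever $H(X,Y,E)$ is a complete bipartite $r$-uniform hypergraph with $|X|=|Y|=r$, one has $ed_t(H)=|E|/2$. Substituting the count above,
\[
ed_t(H) \;=\; \frac{|E|}{2} \;=\; \frac{2\delta}{2} \;=\; \delta,
\]
which is exactly the claim.

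No real obstacle is anticipated: the nontrivial work (constructing the total edge-dominating pairs and proving the upper bound $|E|/2$) is already discharged by the previous theorem, and the only new ingredient is the elementary handshake computation that converts $\delta$ into $|E|$. The one thing to be careful about is simply citing the right preceding result and noting that the hypothesis $|X|=|Y|=r$ is precisely what lets us apply that theorem.
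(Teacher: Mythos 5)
Your proposal is correct and matches the paper's argument exactly: the paper likewise obtains $|E|=2\delta$ by the handshake count (citing its earlier lemma for the $ed(H)=\delta$ case) and then substitutes into the preceding theorem's conclusion $ed_t(H)=|E|/2$ to get $\delta$. No differences worth noting.
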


\begin{proof}
It can be noted from theorem-$10$ that $ed_t(H)=\frac{|E|}{2}$ and from lemma-$8$, $|E|=2\delta$. 
Hence, $ed_t(H$) becomes $\delta$.
\end{proof}

\section{Conclusion}
Although we found the non-trivial bounds on domatic and edge-domatic numbers for some specific uniform 
hypergraphs, the problem is still open for other classes of hypergraphs.
\bibliographystyle{elsarticle-num.bst}

\small

\end{document}